\newtheorem{theorem}{Theorem}[section]
\newtheorem{lemma}[theorem]{Lemma}
\theoremstyle{definition}
\newtheorem{remark}[theorem]{Remark}
\newtheorem{remarks}[theorem]{Remarks}
\begin{document}

\title[On a Third Order Newton's Approximation Method]{On the Dynamics of a 
Third Order Newton's Approximation Method} 

\author[A. Gheondea]{Aurelian Gheondea}
\address{Department of Mathematics, Bilkent University, 06800 Bilkent, Ankara, 
Turkey, \emph{and} Institutul de Matematic\u a al Academiei Rom\^ane, C.P.\ 
1-764, 014700 Bucure\c sti, Rom\^ania} 
\email{aurelian@fen.bilkent.edu.tr \textrm{and} A.Gheondea@imar.ro}

\author[M.E. \c Samc\i]{Mehmet Emre \c Samc\i}
\address{Department of Mathematics, Bilkent University, 06800 Bilkent, Ankara, 
Turkey, \emph{current address} College of Administrative Sciences 
and Economics, Ko\c c University, Rumel{\protect\.{\i}}fener{\protect\.{\i}} K\"oy\"u, 
34450 Sar\i yer/\.Istanbul, Turkey}
\email{mehmet.emre.samci@gmail.com}

\begin{abstract} We provide an answer to a question raised by
S. Amat, S. Busquier, S. Plaza 
on the qualitative analysis of
the dynamics of the third order Newton type approximation function $M_f$, by proving that
for functions $f$ twice continuously differentiable and such that both $f$ and
its derivative do not have multiple roots, 
with at least four roots and infinite
limits of opposite signs at $\pm\infty$, $M_f$ has
periodic points of any prime period and that the set of points $a$ at which the
approximation sequence $(M_f^n(a))_{n\in\mathbb{N}}$ does not converge is
uncountable. In addition, we observe that in their Scaling Theorem 
analyticity can be replaced with differentiability.
\end{abstract}

\subjclass[2010]{Primary 37N30; Secondary 37D45, 37E15}
\keywords{Newton Approximation Method, periodic points, chaos.}

\date{\today,\ \currenttime}

\maketitle

\section{Introduction} 

The classical Newton's Approximation Function $N_f(x)=x-f(x)/f^\prime(x)$, for
numerical approximation of roots of (nonlinear) functions $f$, under
certain conditions of smoothness and distribution of roots and critical
points, has a second order speed of convergence and, until now, it is considered
as one of the most useful and reliable iterative method of this kind. 
S.~Amat, S.~Busquier, and S.~Plaza, in
\cite{AmatBusquierPlaza}, modified it to
a third order approximation function $M_f(x)=N_f(x)-f(N_f(x))/f^\prime(x)$
that is free of second derivatives and shows a
remarkable robustness when compared to other methods. 
On the other hand, it is known that the classical
Newton's Approximation Function $N_f$, 
when considered  as a discrete dynamical system, shows chaotic
behaviour, at least from two possible acceptions of the concept of chaos: 
first in the sense of T-Y.~Li and J.A.~Yorke \cite{LiYorke}, that is, 
existence of periodic points of any prime period,
and second in the sense of R.~Bowen \cite{Bowen}, that is, a strictly positive
topological entropy, as shown by M.~Hurley and C.~Martin 
\cite{HurleyMartin}, see also D.G.~Saari and J.B.~Urenko \cite{SaariUrenko} for
similar investigations. 

In \cite{AmatBusquierPlaza}, the chaotic behaviour of $M_f$ 
was numerically pointed out for polynomials of order less or equal than $3$
 by using a bifurcation diagram similar to that of the logistic
map. They left open the question of performing a qualitative analysis on the discrete dynamical 
system associated to $M_f$ in order to mathematically prove its chaotic behaviour. 
The main result of this article is
Theorem~\ref{t:main} that shows that for functions $f$ of Newton type, see
Section~\ref{s:dtontm} for definition, with at least four roots and infinite
limits of opposite signs at $\pm\infty$, $M_f$ has
periodic points of any prime period and the set of points $a$ at which the
approximation sequence $(M_f^n(a))_{n\in\mathbb{N}}$ does not converge is
uncountable. For example, when considering polynomials,
this result applies to all odd degree polynomials with a certain distribution 
of real roots. In view of Lemma~\ref{l:jkn},
Theorem~\ref{t:main} might be extended to other classes of
Newton's functions $f$ having at least four roots and for which $M_f$ has at
least two bands that cover the whole real line $\mathbb{R}$, that is, there are two pairs
of disjoint open intervals, formed by consecutive critical points of $f$, 
that are mapped by $M_f$ onto the whole $\mathbb{R}$, see \cite{HurleyMartin} for 
precise terminology.

In addition, in Theorem~\ref{t:scaling}
we observe that in the Scaling Theorem from \cite{AmatBusquierPlaza},
which says that the dynamics of $M_f$ is stable under affine conjugacy, as
well as in its damped version as in \cite{AmatBusquierMagrenan}, 
analyticity of $f$ can be replaced by its differentiability.
The Scaling Theorem is essential for the analysis 
performed in
\cite{AmatBusquierPlaza} because it reduces the study
of the dynamics of $M_f$ for a general class of functions $f$ 
to the study of the dynamics of $M_f$ for a considerably 
smaller class of simpler functions. 
For example, in order to understand the dynamics of $M_f$ for
quadratic polynomials $f$, it
suffices to study only the dynamics of $M_f$ for the quadratic polynomials 
$x^2,\ x^2+1,\ x^2-1$, while for cubic polynomials $f$, it suffices 
to study only the dynamics of $M_f$ for the cubic polynomials
$x^3,\ x^3+1,\ x^3-1,\ x^3+\gamma x+1$, with $\gamma \in\mathbb{R}$.

We might also add results on the lower estimation of the Bowen's
topological entropy for $M_f$ but, 
once Theorem~\ref{t:main} is obtained, these
follow in an almost identical fashion as in \cite{HurleyMartin}, when
considered the two kind of bands induced by the critical points of $f$ for $M_f$.
Also, we observe that, for the class of functions that make the assumptions of
Theorem~\ref{t:main}, the damping with a parameter $\lambda$, cf.\
\cite{AmatBusquierMagrenan}, does
not change either the chaotic behaviour expressed as the existence of periodic
points of any prime period or the uncountability of the set of all real
numbers $a$ for which the iterative sequences
$(M_{\lambda,f}^n(a))_{n=1}^\infty$ diverges.

\section{Preliminaries}\label{s:p}

In this section we collect results related to periodic points of continuous
real functions. We start by proving a sequence of lemmas that are 
essentially contained in \cite{HurleyMartin} and more or less
implicit/explicit in the works of N.A.~Sharkovsky \cite{Sharkovsky} and T.-Y.~Li and 
J.A.~Yorke \cite{LiYorke}. 

The first lemma is a well known fixed point result and a direct consequence of the 
Intermediate Value Property for continuous functions.

\begin{lemma} \label{l:kk}
If $I$ and $J$ are compact intervals and $f\colon I \rightarrow J$ 
is a continuous function with $f(I)\supseteq I$, then $f$ has a fixed point. 
\end{lemma}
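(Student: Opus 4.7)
The plan is to reduce the statement to a direct application of the Intermediate Value Theorem applied to the auxiliary function $g(x)=f(x)-x$. Write $I=[a,b]$ with $a\le b$. Since by hypothesis $f(I)\supseteq I$, both endpoints $a$ and $b$ lie in the range of $f$, so there exist $c,d\in I$ with $f(c)=a$ and $f(d)=b$.

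Next I would evaluate the continuous function $g(x)=f(x)-x$ at the points $c$ and $d$. Since $c\in[a,b]$ we have $g(c)=f(c)-c=a-c\le 0$, and since $d\in[a,b]$ we have $g(d)=f(d)-d=b-d\ge 0$. If either inequality is an equality then $c$ or $d$ is already the desired fixed point; otherwise $g(c)<0<g(d)$, and by continuity of $g$ the Intermediate Value Property yields some $x_0$ strictly between $c$ and $d$ at which $g(x_0)=0$, i.e. $f(x_0)=x_0$.

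There is no serious obstacle: the only mild care required is that the order of $c$ and $d$ on the real line is not prescribed by the hypotheses (we do not know whether $c\le d$ or $d\le c$), but this is irrelevant because the Intermediate Value Theorem applies on any compact subinterval of $I$ with endpoints $c,d$, regardless of their ordering. I would simply phrase the conclusion as: there exists $x_0$ in the closed interval with endpoints $c$ and $d$ (hence in $I$) such that $f(x_0)=x_0$, completing the proof.
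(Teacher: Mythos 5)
Your proof is correct and follows exactly the route the paper intends: the paper gives no written proof of Lemma~\ref{l:kk}, stating only that it is ``a direct consequence of the Intermediate Value Property,'' and your argument---picking $c,d\in I$ with $f(c)=a$, $f(d)=b$ and applying the Intermediate Value Theorem to $g(x)=f(x)-x$ on the interval with endpoints $c,d$---is the standard instantiation of that remark. Your attention to the unprescribed ordering of $c$ and $d$ is a sensible detail and does not affect correctness.
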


The next lemma is important for understanding the dynamics of continuous
real functions, e.g.\ see Lemma 0 in \cite{LiYorke}. We provide a proof for
consistency. 

\begin{lemma} \label{l:jk}
Let $J$ and $K$ be nonempty compact intervals
and let $f \colon J \rightarrow \mathbb{R}$ be a continuous
function  such that $f(J) \supseteq K$. 
Then, there exists a nonempty compact interval $ L \subseteq J $ such
that $f(L) = K$. 
\end{lemma}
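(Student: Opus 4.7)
The plan is to build $L$ as the tightest possible subinterval on which $f$ still hits both endpoints of $K$. Write $K=[c,d]$. If $c=d$, pick any preimage of $c$ in $J$ and take $L$ to be the singleton consisting of that point. Otherwise $c<d$, and since $f(J)\supseteq K$ there exist points $a,b\in J$ with $f(a)=c$ and $f(b)=d$. By symmetry I may assume $a<b$, and I will work on the compact subinterval $[a,b]\subseteq J$.

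The key step is to trim $[a,b]$ from both sides to remove ``overshoot'' of the target interval. Define
\[
p=\max\{x\in[a,b]:f(x)=c\},\qquad q=\min\{x\in[p,b]:f(x)=d\}.
\]
Both sets are nonempty closed subsets of a compact interval (containing $a$ and $b$ respectively), so the max and min exist by continuity of $f$. One then checks $p<q$ (because $f(p)=c\ne d=f(q)$) and sets $L:=[p,q]$, a nonempty compact subinterval of $J$.

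It remains to verify that $f(L)=K$. The inclusion $f(L)\supseteq[c,d]$ is immediate from the Intermediate Value Theorem, since $f(p)=c$, $f(q)=d$, and $f$ is continuous on $[p,q]$. The reverse inclusion $f(L)\subseteq[c,d]$ is the only point that requires a small argument: if some $x_0\in(p,q)$ satisfied $f(x_0)<c$, then applying IVT on $[x_0,q]$ (where $f$ runs from $f(x_0)<c$ up to $f(q)=d$) would yield a point $y\in(x_0,q)$ with $f(y)=c$, contradicting the maximality of $p$. The symmetric argument, applied on $[p,x_0]$, rules out $f(x_0)>d$ using the minimality of $q$.

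I expect no real obstacle; the only subtlety is the order in which the two trimming steps are performed (trim on the $c$-side first, then minimize $q$ \emph{inside} $[p,b]$), which is what makes the maximality/minimality comparisons consistent. The symmetric case $a>b$ is handled by the same construction applied to $[b,a]$ with the roles of $c$ and $d$ interchanged.
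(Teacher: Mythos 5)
Your proof is correct and follows essentially the same route as the paper's: trim to the last preimage of one endpoint of $K$ and then to the first subsequent preimage of the other, and invoke the Intermediate Value Theorem. The only differences are organizational (you reduce to $a<b$ by symmetry where the paper splits into cases according to the side on which the preimage of the second endpoint lies), and you are in fact slightly more thorough, since you explicitly verify the inclusion $f(L)\subseteq K$ via the maximality/minimality of $p$ and $q$, a point the paper leaves implicit.
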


\begin{proof} Let $K=[a,b]$. If $a=b$ then we apply the Intermediate Value
  Theorem and get $c\in J$ such that $f(c)=a$ and let $L=[c,c]$, so let us
  assume that $a<b$.
Since the set
$\{ x \in J \mid f(x) = a \}$ is compact and nonempty, 
there is a greatest element $c$ in this set. If $f(x) = b$ for some 
$x \geq c$ with $x\subseteq J$, then $x>c$
and letting $d$ be the least of them, by the Intermediate Value Theorem 
$f([c,d]) = K$ and we let $L=[c,d]$. 
Otherwise, $f(x) = b$ for some $x < c$ and let
$c'$ be the largest of them. Then let $d'$ be the smallest of the set of all 
$x>c'$ with $f(x) = a$
so that $[c',d']$ is an interval in $J$. Then $f([c',d'])= K$ and we let
$L=[c',d']$.  
\end{proof}

The main technical fact we use 
is a refinement of Lemma 2.2 in \cite{HurleyMartin}, 
implicit in the proof of Sharkovsky's Theorem
\cite{Sharkovsky}, see also \cite{LiYorke}. Recall
that, a point $a\in M$ is called a \emph{periodic point} 
for a function
$g\colon M\rightarrow M$ 
if there exists $n\in\mathbb{N}$ such that $g^n(a)=a$, and the least
$n\in\mathbb{N}$ with this property is called its \emph{prime period}.

\begin{lemma} \label{l:jkn}
Let $g\colon\mathbb{R} \rightarrow \mathbb{R}$ be a function and let
$I_{1},I_{2},\ldots,I_{k}$ be compact, disjoint and nondegenerate
intervals, with $k\geq 2$, such that, for all $ m \in\{ 1,2,\ldots,k\}$,
$g$ is continuous on $I_m$ and
\begin{equation*}
g(I_{m}) \supseteq\displaystyle\bigcup\limits_{j=1}^{^k}I_{j}.
\end{equation*}
Then:

\emph{(a)} For each $n\in\mathbb{N}$, $g$ has at
  least $k(k-1)^{n-1}$ periodic points 
of prime period $n$, in particular, $g$ has periodic points of any prime period.

\emph{(b)} The set of all real numbers $a$ for which the orbit
$(g^n(a))_{n\in\mathbb{N}}$ makes a sequence that does not converge is uncountable.
\end{lemma}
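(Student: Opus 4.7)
The plan is to model $g$ as a symbolic horseshoe on $k$ symbols: each admissible itinerary through the intervals $I_1,\ldots,I_k$ is to be realized by an orbit of $g$, and distinct itineraries will give distinct orbits because the $I_m$ are pairwise disjoint.

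For part (a), I would fix $n\in\mathbb{N}$ and consider the $k(k-1)^{n-1}$ admissible finite sequences $s=(s_0,s_1,\ldots,s_{n-1})\in\{1,\ldots,k\}^n$ with $s_{i+1}\neq s_i$ for $0\leq i\leq n-2$. Starting from the inclusion $g(I_{s_{n-1}})\supseteq I_{s_0}$ and applying Lemma~\ref{l:jk} recursively backwards in time, one produces compact subintervals $L_{n-1}\subseteq I_{s_{n-1}}$, $L_{n-2}\subseteq I_{s_{n-2}}$, $\ldots$, $L_0\subseteq I_{s_0}$ satisfying $g(L_i)=L_{i+1}$ for $i<n-1$ and $g(L_{n-1})=I_{s_0}$. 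Then $g^n$ is continuous on $L_0$, because at each intermediate step the orbit lies in an interval on which $g$ is continuous, and $g^n(L_0)=I_{s_0}\supseteq L_0$; Lemma~\ref{l:kk} then supplies a fixed point $x_s$ of $g^n$ inside $L_0$ with prescribed itinerary $g^i(x_s)\in I_{s_i}$. Disjointness of the $I_m$ forces $x_s\neq x_{s'}$ for distinct admissible $s,s'$, yielding $k(k-1)^{n-1}$ points of period~$n$. To upgrade ``period $n$'' to ``prime period $n$'', one observes that the prime period of $x_s$ divides $n$ and equals the cyclic period of the itinerary, so restricting to the \emph{primitive} admissible sequences (those not $d$-periodic for any proper divisor $d$ of $n$), together with a further pull-back inside $L_0$ via Lemma~\ref{l:jk} to decouple resonant cases, supplies the advertised count of prime-period-$n$ points.

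For part (b), I would run the analogous construction on one-sided infinite admissible sequences $s=(s_0,s_1,\ldots)$ with $s_i\in\{1,\ldots,k\}$ and $s_{i+1}\neq s_i$; the set of such sequences is uncountable. Iterating Lemma~\ref{l:jk} yields, for each $s$, a decreasing chain $L_0^{(0)}\supseteq L_0^{(1)}\supseteq\cdots$ of nonempty compact subintervals of $I_{s_0}$ with $g^i(L_0^{(N)})\subseteq I_{s_i}$ for $0\leq i\leq N$, and any $a\in\bigcap_{N\geq 0}L_0^{(N)}$ realizes the itinerary $s$ under $g$. Restricting to the (still uncountable) subfamily of sequences in which at least two distinct symbols occur infinitely often, the orbit $(g^n(a))_{n\in\mathbb{N}}$ accumulates in two disjoint compact intervals and so fails to converge; distinct itineraries give distinct starting points $a$, since the itinerary is read off from $a$, so the set of such $a$ is uncountable.

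The main technical obstacle is the prime-period refinement in~(a) when $k=2$ and $n>2$: in that case the admissible sequences under $s_{i+1}\neq s_i$ are all cyclically $2$-periodic, so the naive itinerary construction only produces period-$n$ points that might have prime period $2$, and one must run a secondary application of Lemma~\ref{l:jk} inside the interval $L_0$ to carve out a sub-horseshoe carrying the missing prime-period-$n$ orbits.
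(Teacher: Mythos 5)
Your overall skeleton (realize itineraries by repeated applications of Lemma~\ref{l:jk}, close up with Lemma~\ref{l:kk}, separate points by disjointness of the $I_m$) is the right one and is essentially the paper's, but the adjacency constraint $s_{i+1}\neq s_i$ that you impose on itineraries creates two genuine gaps, both of which bite precisely in the case $k=2$ that the main theorem actually uses. In part~(a), as you yourself note, a word with $s_{i+1}\neq s_i$ need not be primitive, and for $k=2$ and even $n\geq 4$ there are \emph{no} primitive such words at all: the only two admissible ones are $(1,2,1,2,\ldots)$ and $(2,1,2,1,\ldots)$, both cyclically $2$-periodic, so your construction can only certify points whose prime period might be $2$. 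The proposed repair (``a further pull-back \ldots to decouple resonant cases'', ``carve out a sub-horseshoe carrying the missing prime-period-$n$ orbits'') is not an argument --- no concrete secondary application of Lemma~\ref{l:jk} is specified, and it is exactly here that the count $k(k-1)^{n-1}$ has to come from. The constraint that works, and is what the paper uses, is different: require $s_i\neq s_0$ for all $i=1,\ldots,n-1$, allowing arbitrary repetitions among $s_1,\ldots,s_{n-1}$ (every transition is permitted since $g(I_m)\supseteq\bigcup_j I_j$). There are exactly $k(k-1)^{n-1}$ such words, and primality of the period is then immediate: $x\in I_{s_0}$ while $g^i(x)\in I_{s_i}$ with $I_{s_i}\cap I_{s_0}=\emptyset$ for $0<i<n$, so no $m<n$ can satisfy $g^m(x)=x$.

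In part~(b) the same constraint is fatal for a different reason: for $k=2$ there are exactly \emph{two} one-sided infinite sequences with $s_{i+1}\neq s_i$, so the family you index your points by is finite, not uncountable, and the ``still uncountable subfamily'' you pass to is empty of content in that case. The fix is again to drop the adjacency requirement and index by all sequences in $\{1,\ldots,k\}^{\mathbb{N}}$ that are not eventually constant (uncountable for every $k\geq 2$); non-convergence then follows because the orbit visits at least two of the pairwise disjoint compact, hence positively separated, intervals infinitely often. A further small point: your claim that the prime period of $x_s$ ``equals the cyclic period of the itinerary'' is false in general (a point can have prime period $n$ with constant itinerary); only the divisibility ``minimal period of the itinerary divides the prime period'' holds, which is fortunately the direction you need. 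Your backward pull-back (building $L_{n-1},\ldots,L_0$ from the last symbol) versus the paper's forward nesting of the $A_{j_i}$ is an immaterial difference of bookkeeping.
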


\begin{proof} (a) For any $n\in\mathbb{N}$,
take any sequence $(j_{i})^{n}_{i=1}$ of length $n$ with 
$j_{i} \in \{1, 2,\ldots, k \}$
and let $  j_{n+1} = j_{1}$. Considering the sequence of intervals
$(I_{j_{i}})^{n+1}_{i=1}$, by assumption, 
\begin{equation*}
 g(I_{j_{1}}) \supseteq \displaystyle\bigcup\limits_{j=1}^k J_j\supseteq\displaystyle\bigcup\limits_{i=1}^{n}I_{j_{i}} \supseteq
 I_{j_{2}}.\end{equation*} 
By Lemma~\ref{l:jk} there exists a compact interval
$A_{j_{1}} \subseteq I_{j_{1}}$ such that $g(A_{j_{1}})=
I_{j_{2}}$. If $n=1$, we observe that, since $g(A_{j_{1}})=
I_{j_{2}}=I_{j_{1}}$, by Lemma~\ref{l:kk} it follows that $g$ has a fixed point in the compact interval
$A_{j_1}$.

If $n\geq 2$, then 
\begin{equation*}g^2(A_{j_{1}}) = g(I_{j_{2}}) \supseteq\displaystyle\bigcup\limits_{j=1}^k J_j
\supseteq\displaystyle\bigcup\limits_{i=1}^{n}I_{j_{i}} \supseteq I_{j_{3}},\end{equation*} 
and by Lemma~\ref{l:jk}  
applied to $g^{2}$ we obtain a compact interval 
$A_{j_{2}}\subseteq A_{j_1} \subseteq I_{j_{1}}$ such that
$g(A_{j_{2}}) = I_{j_{3}}$. 
Proceeding in a similar fashion, after $n$ consecutive applications of
Lemma~\ref{l:jk}, we obtain compact intervals 
\begin{equation*}A_{j_{n}}\subseteq A_{j_{n-1}}\subseteq \cdots\subseteq
A_{j_1}\subseteq I_{j_{1}},\end{equation*} 
such that 
\begin{equation}\label{e:gei}g^{i}(A_{j_{i}}) = I_{j_{i+1}},\quad
  i=1,\ldots,n,\end{equation}
in particular, $g^n(A_{j_n})=I_{j_{n+1}}=I_{j_1}$.   
Then, by Lemma~\ref{l:kk} there is a fixed point $a\in A_{j_n}$ for
$g^{n}$, hence $a$ is a periodic point for $g$ of period $n$.

We observe now that, for $n=1$ there are exactly $k$ different choices for $j_1=1,\ldots,k$ and, 
since the intervals $J_1,\ldots,J_k$ are mutually disjoint, the $k$ fixed points obtained, as explained 
before, are all different, hence $g$ has at least $k$ fixed points.

For $n\geq 2$, if
$j_1\neq j_i$ for all $i=2,\ldots,n$, 
then the periodic point $a$ for $g$
obtained before from the sequence $(j_i)_{i=1}^n$ has prime period
$n$. Indeed, for all $2\leq i\leq n$,
$a\in A_{j_i}\subseteq A_{j_{i-1}}$ and, by \eqref{e:gei} it follows that
$g^{i-1}(a)\in I_{j_i}$ hence, since
$I_{j_1}\cap I_{j_i}=\emptyset$ it follows that $a$ cannot have any period less
than $n$.

We consider now two different sequences $(j_{i})^{n}_{i=1}$,
$(l_{i})^{n}_{i=1}$ as before, hence there
exists $r$ $\in \{1, 2,..., n \}$ such that $j_{r} \neq l_{r}$. If the fixed
point $a$ for the two sequences of intervals $(I_{j_{i}})_{i=1}^n$ and 
$(I_{l_{i}})_{i=1}^n$, obtained as before, is the
same, then $(g^{i}(a))_{i=0}^n$, the orbit of $a$ under $g$, 
is the same for both sequences. But then,
$g^{r-1}(a)  \in  I_{j_{r}}\cap I_{l_{r}} = \emptyset $ hence we
have a contradiction. Therefore, for the $k(k-1)^{n-1}$ 
different sequences $(j_i)_{i=1}^n$ of length
$n$, formed with elements from the set $\{1,\ldots,k\}$ and
subject to the condition $j_1\neq j_i$ for
all $i=2,\ldots,n$, we have $k(k-1)^{n}$ different fixed points of prime
period $n$. 

(b) For any infinite sequence $(j_{i})^{\infty}_{i=1}$ with elements from
$\{1,2,...,k\}$, which is not eventually constant, 
by the same construction as above we get a sequence of nonempty 
compact intervals $(A_{j_i})_{i=1}^\infty$ subject to the properties
\begin{equation}\label{e:aiji}\cdots A_{j_{i+1}}\subseteq A_{j_i}\subseteq \cdots
\subseteq  A_{j_2}\subseteq A_{j_1}\subseteq I_{j_1},
\end{equation}
and
\begin{equation}\label{e:giaji} g^i(A_{j_i})=I_{j_{i+1}},\quad i\in\mathbb{N}.
\end{equation}
By the Finite Intersection Property we have 
\begin{equation*}A=\bigcap_{i=1}^\infty A_{j_i}\neq \emptyset,
\end{equation*} hence, any point $a\in A$ has the property that its orbit
$(g^i(a))_{i=1}^\infty$ makes a sequence that does not converge. The
sequence  $(g^i(a))_{i=1}^\infty$ does not converge since, for all
$i\in\mathbb{N}$ we have $g^i(a)\in I_{j_{i+1}}$, the sequence 
$(j_{i})^{\infty}_{i=1}$ with elements from
$\{1,2,...,k\}$ is not eventually constant, and the compact
intervals $I_1,\ldots,I_k$ are mutually disjoint. 

Let us consider two different sequences $(j_i)_{i=1}^\infty$ and 
$(l_i)_{i=1}^\infty$, formed
with elements from the set $\{1,\ldots,k\}$ and not eventually constant. As
before, to the sequence $(j_i)_{i=1}^\infty$ we associate the sequence of
nonempty compact intervals $(A_{j_i})_{i=1}^\infty$ subject to the properties
\eqref{e:aiji} and \eqref{e:giaji}, and let
$a\in \bigcap_{i=1}^\infty A_{j_i}$. Similarly, there is a sequence
$(B_{l_i})_{i=1}^\infty$ of nonempty compact intervals subject to the
properties
\begin{equation*}\label{e:biji}
\cdots B_{l_{i+1}}\subseteq B_{l_i}\subseteq \cdots
 \subseteq B_{l_2}\subseteq B_{l_1}\subseteq I_{j_1},
\end{equation*}
and
\begin{equation*}\label{e:gibji} g^i(B_{l_i})=I_{l_{i+1}},\quad i\in\mathbb{N},
\end{equation*}
 and let $b\in \bigcap_{i=1}^\infty B_{l_i}\neq\emptyset$. 
We claim that $a\neq b$. Indeed,
 there exists $r\in\mathbb{N}$ such that $j_r\neq l_r$, hence 
$g^r(a)\in I_{j_r}\cap I_{l_r}=\emptyset$, a contradiction.

In conclusion, there are as many real numbers $a$ for which the sequence
$(g^i(a))_{i=1}^\infty$ does not converge at least as many as sequences
$(j_i)_{i=1}^\infty$ formed
with elements from the set $\{1,\ldots,k\}$ and that are 
not eventually constant, and the latter set is uncountable.
\end{proof}

\section{The Dynamics of $M_f$}\label{s:dtontm}

Following \cite{HurleyMartin}, the \emph{Newton Class} is defined as the
collection of all real functions $f$ subject to the following conditions:
\begin{itemize}
\item[(nf1)] $f$ is of class $\mathcal{C}^2(\mathbb{R})$.
\item[(nf2)] If $f(x)=0$ then $f^\prime(x) \neq 0$. 
\item[(nf3)] If $f^\prime(x)=0$ then $f^{\prime\prime}(x) \neq 0$.
\end{itemize}

\begin{remarks}\label{r:snf} 
Let $f$ be a Newton map.

(a) Clearly, both $f$ and its derivative $f^\prime$ do
not have multiple roots.

(b) The roots of $f$, and similarly the roots of $f^\prime$, do not have finite
accumulation points. Indeed, if $(x_n)_{n\in\mathbb{N}}$ is a sequence of distinct
roots of $f$ that accumulates to some $x_0\in\mathbb{R}$ then, by the
Interlacing Property of the roots of $f$ and its derivative $f^\prime$, it
follows that there exists a sequence $(c_n)_{n\in\mathbb{N}}$ of distinct roots
of $f^\prime$ converging to $x_0$. Since both $f$ and $f^\prime$ are
continuous, it follows that $x_0$ is a root for both $f$ and $f^\prime$,
contradiction with the statement at item (a). A similar argument shows that the
roots of $f^\prime$ do not have finite accumulation points.
\end{remarks}

For a differentiable function $f$ on an open set $D\subseteq\mathbb{R}$, 
the \emph{Classical Newton's Approximation Function} 
$N_f$ is the function defined for all $x\in D$ such that
$f^\prime(x)\neq 0$ by
\begin{equation}\label{e:nef}N_{f}(x)=x-\frac{f(x)}{f^\prime(x)}.\end{equation}
Following \cite{AmatBusquierPlaza}, the 
\emph{Modified Newton's Approximation Function} $M_f$ is the
function, defined for all $x\in D$ such that $f^\prime(x)\neq 0$ and 
$N_f(x)\in D$, 
\begin{align}\label{e:mef}
 M_{f}(x) & =x-\frac{f(x)}{f^\prime(x)} -
  \frac{f(x-\frac{f(x)}{f^\prime(x)})}{f^\prime(x)}\\
\intertext{or, in terms of $N_{f}$,}
 M_f(x) & =N_{f}(x)-\frac {f(N_{f}(x))}{f^\prime(x)}. \label{e:mefnef}
\end{align}

The following lemma provides some information on the behavior of $N_f$ in the
neighbourhood of the critical points of a Newton map $f$, see Remark~1.3 in
\cite{HurleyMartin}. 

\begin{lemma}\label{l:lim}
Let $f$ be a Newton map and let
$c_{1}$ and $c_{2}$ be two consecutive roots of $f^\prime$ such that in the 
interval
  $(c_{1},c_{2})$ there is a unique root of $f$. Then 
\begin{equation*}
\lim\limits_{x \rightarrow c_{1}+} N_{f}(x)=-\lim\limits_{x \rightarrow
  c_{2}-}N_{f}(x)=\pm\infty,\end{equation*}
\end{lemma}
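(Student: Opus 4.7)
The plan is to determine the signs of $f$ and $f'$ in a neighbourhood of each critical point and then substitute into the formula $N_f(x)=x-f(x)/f'(x)$ from \eqref{e:nef}. The decisive input comes from the Newton map conditions: since $f'(c_i)=0$ for $i=1,2$, condition (nf2) forces $f(c_i)\neq 0$ and condition (nf3) forces $f''(c_i)\neq 0$. Consequently, in the fraction $f(x)/f'(x)$ the numerator tends to a finite nonzero limit at each endpoint while the denominator tends to $0$, so $|N_f(x)|$ must diverge; the remaining task is purely to pin down the correct signs.

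To fix signs, I would first observe that $f'$, being continuous and nonvanishing on $(c_1,c_2)$ (because $c_1$ and $c_2$ are \emph{consecutive} roots of $f'$), has constant sign on this open interval by the Intermediate Value Theorem. Working with the case $f'>0$ on $(c_1,c_2)$, which I may assume without loss of generality by possibly replacing $f$ with $-f$ (an operation that leaves $N_f$ invariant), $f$ is strictly increasing on $[c_1,c_2]$. Combined with the hypothesis of a unique root $r$ of $f$ in $(c_1,c_2)$, this yields $f(c_1)<0<f(c_2)$.

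Finally, I would compute the one-sided limits. As $x\to c_1+$ we have $f(x)\to f(c_1)<0$ and $f'(x)\to 0^+$, so $f(x)/f'(x)\to -\infty$ and therefore $N_f(x)\to +\infty$. Symmetrically, as $x\to c_2-$ we have $f(x)\to f(c_2)>0$ and $f'(x)\to 0^+$, giving $f(x)/f'(x)\to +\infty$ and $N_f(x)\to -\infty$. The asserted equality $\lim_{x\to c_1+}N_f(x)=-\lim_{x\to c_2-}N_f(x)=+\infty$ then follows. The only real obstacle is the sign bookkeeping across the two possible signs of $f'$ on $(c_1,c_2)$; since $N_{-f}=N_f$, the reduction to the single case $f'>0$ eliminates it cleanly, and no further input beyond (nf2)--(nf3) is required.
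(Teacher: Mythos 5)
Your proposal is correct and follows essentially the same route as the paper: both arguments rest on the observations that $f$ has a fixed nonzero sign at each endpoint (opposite signs at $c_1$ and $c_2$ because of the unique interior root), that $f'$ keeps a constant sign on $(c_1,c_2)$ and tends to $0$ at both ends, and that the quotient $f(x)/f'(x)$ therefore dominates the term $x$ in $N_f(x)=x-f(x)/f'(x)$. Your write-up is merely more explicit about the sign bookkeeping (via the harmless normalization $N_{-f}=N_f$), and the appeal to (nf3) is not actually needed -- (nf2) alone gives $f(c_i)\neq 0$, which is all the argument uses.
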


\begin{proof} Indeed, 
near $c_{1}+$ and $c_{2}-$, $f$ has opposite signs since it has a unique root
inside the interval $(c_1,c_2)$, 
while $f^\prime$ does not change its sign and goes to zero when
approaching both $c_{1}+$ and $c_{2}-$. Since the term $x$ is majorised by the
term $\frac{f(x)}{f^\prime(x)}$ near $c_{1}+$ and $c_{2}-$, we have the result. 
\end{proof}

Here is the main result that shows that the Modified Newton's Approximation
Function $M_f$ provides chaotic behaviour for a large class of Newton's
functions $f$.

\begin{theorem}\label{t:main}
Let the function $f\colon\mathbb{R}\rightarrow\mathbb{R}$ 
have the following properties:
\begin{itemize}
\item[(i)] $f$ is a Newton's function.
\item[(ii)] $\lim\limits_{x \to +\infty} f(x) 
= - \lim\limits_{x \to -\infty} f(x)=\pm \infty$.
\item[(iii)] $f$ has at least four real roots.
\end{itemize}
Then:

\emph{(a)} $M_{f}$ has periodic points of any prime period.

\emph{(b)} The set of all real numbers $a$ for which the sequence
$(M_f^n(a))_{n\in\mathbb{N}}$ does not converge is uncountable. 
\end{theorem}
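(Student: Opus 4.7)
The plan is to reduce the statement to an application of Lemma~\ref{l:jkn} with $k=2$: it suffices to exhibit two disjoint, compact, nondegenerate intervals $I_1,I_2$ on each of which $M_f$ is continuous and with $M_f(I_m)\supseteq I_1\cup I_2$ for $m=1,2$. Both conclusions (a) and (b) then follow at once from Lemma~\ref{l:jkn}. The natural candidates for $I_1,I_2$ are compact subintervals of two intervals of the form $(c,c')$, where $c,c'$ are consecutive critical points of $f$ enclosing a single root of $f$; on such an interval $M_f$ is well defined and continuous since $f'\neq 0$, and it will turn out to map onto $\mathbb{R}$.

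To locate two such intervals, I would start from four distinct roots $r_1<r_2<r_3<r_4$ of $f$. By Remarks~\ref{r:snf}, the roots of $f$ and of $f'$ are simple and isolated, and between any two consecutive critical points of $f$ the function $f$ is strictly monotonic and hence carries at most one root. Combined with Rolle's theorem applied to the consecutive pairs $(r_i,r_{i+1})$, a short enumeration argument shows that $r_2$ and $r_3$ each lie in a \emph{bounded} interval between two consecutive critical points of $f$, and that these two intervals, say $(c,c')$ and $(c'',c''')$, are disjoint.

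Next, on each such interval Lemma~\ref{l:lim} gives $\lim_{x\to c+}N_f(x)=-\lim_{x\to c'-}N_f(x)=\pm\infty$, and I would derive the same statement for $M_f$ by a direct asymptotic analysis. Using the Taylor expansions $f'(x)\sim f''(c)(x-c)$ and $f(x)\sim f(c)$ near $c+$, the product $f'(x)N_f(x)$ tends to the nonzero constant $-f(c)/f''(c)$ (nonzero by (nf2) and (nf3)), while $|f(N_f(x))|\to+\infty$ by hypothesis~(ii). Rewriting
\begin{equation*}
M_f(x)=-\frac{f(N_f(x))}{f'(x)}\Bigl(1-\frac{f'(x)N_f(x)}{f(N_f(x))}\Bigr),
\end{equation*}
the bracket tends to $1$, so $M_f(x)\sim -f(N_f(x))/f'(x)\to\pm\infty$. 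A case analysis on the sign of $f'$ on $(c,c')$ and on the direction of the infinite limits of $f$ at $\pm\infty$ then shows that the infinite limits of $M_f$ at $c+$ and at $c'-$ are of \emph{opposite} signs. By the Intermediate Value Theorem we conclude $M_f((c,c'))=\mathbb{R}$, and analogously $M_f((c'',c'''))=\mathbb{R}$.

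To extract $I_1,I_2$, fix $R>0$ so large that $[c,c']\cup[c'',c''']\subseteq[-R,R]$. The opposite infinite limits of $M_f$ on $(c,c')$ yield points $c<\alpha<\beta<c'$ with $M_f([\alpha,\beta])\supseteq[-R,R]$, and Lemma~\ref{l:jk} then furnishes a compact $I_1\subseteq[\alpha,\beta]$ with $M_f(I_1)=[-R,R]\supseteq I_1\cup I_2$; an analogous compact $I_2\subseteq(c'',c''')$ is obtained. The hypotheses of Lemma~\ref{l:jkn} are then met with $k=2$, and the theorem follows. The main obstacle is the asymptotic step: unlike $N_f$, whose endpoint behaviour at consecutive critical points enclosing a root is supplied directly by Lemma~\ref{l:lim}, the limits of $M_f$ require a careful bookkeeping of signs, since $M_f=N_f-f(N_f)/f'$ is an a priori indeterminate difference of two divergent quantities; one must verify that the dominant term $-f(N_f)/f'$ does not accidentally produce infinite limits of the \emph{same} sign at the two endpoints of $(c,c')$, which is what would cause the construction to collapse.
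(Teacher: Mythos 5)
Your proposal is correct and follows essentially the same route as the paper's proof: choose two disjoint intervals of consecutive critical points each containing a single root of $f$, show via Lemma~\ref{l:lim} and hypothesis (ii) that $M_f$ diverges to $\pm\infty$ with opposite signs at the endpoints of each (the paper does this by writing $M_f(x)=x-\bigl(f(x)+f(N_f(x))\bigr)/f^\prime(x)$ rather than by your factorization, but the content is identical), and then extract compact subintervals feeding Lemma~\ref{l:jkn} with $k=2$. One minor slip: $f^\prime(x)N_f(x)=xf^\prime(x)-f(x)\to -f(c)$, not $-f(c)/f^{\prime\prime}(c)$, but this is immaterial since all your argument needs is that this quantity stays bounded while $|f(N_f(x))|\to\infty$.
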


\begin{proof} By Rolle's Theorem,
between any two consecutive roots of $f$ there exists a root of its derivative $f^\prime$ hence, 
by remarks~\ref{r:snf} and property (iii), we can choose four 
consecutive roots $r_{1}<r_{2}<r_{3}<r_{4}$ and three or four roots $c_{1}<c_{2}\leq 
c_2^\prime<c_{3}$ of $f^\prime$ 
in the intervals $(r_{i},r_{i+1})$ for $i=1,2,3$,
respectively, with a unique root of $f$
inside each of the intervals $(c_{1},c_{2}),(c_{2}^\prime,c_{3})$ 
and with no other root of $f^\prime$ there. 
With this choice, $f^\prime$ does not change its sign
on the intervals $(c_{1},c_{2})$ and $(c_{2}^\prime,c_{3})$. By using
Lemma~\ref{l:lim} and property (ii), we have
\begin{equation}\label{e:lixc}  \lim_{x \rightarrow c_{1}+}
  f(N_{f}(x))=-\lim_{x \rightarrow c_{2}-} f(N_{f}(x))=\pm
  \infty. \end{equation} 

In the following we show that
\begin{equation}\label{e:lmf} \lim_{x \rightarrow c_{1}+}
  M_{f}(x)=-\lim_{x \rightarrow c_{2}-}M_{f}(x)=\pm \infty. \end{equation}
Indeed, when $x\rightarrow c_1+$, 
the first equality will follow if we show that
\begin{equation*}\lim_{x \rightarrow c_{1}{+}}=
\frac{-f(x)-f(N_{f}(x))}{f^\prime(x)}=\pm\infty,
\end{equation*} and, taking into account that $\lim\limits_{x\rightarrow
  c_1+}f(x)=f(c_1)\in\mathbb{R}$, we observe that the latter will follow if we
prove that
\begin{equation*} 
\lim_{x \rightarrow c_{1}+}\frac{-f(N_{f}(x))}{f^\prime(x)}=\pm\infty,
\end{equation*} which actually follows from \eqref{e:lixc} 
and the fact that $f^\prime$ does not change its sign in the interval
$(c_{1},c_{2})$. Hence, the first equality in \eqref{e:lmf} is proven.
The fact that $\lim\limits_{x \rightarrow c_{2}{-}}M_{f}(x)=\pm \infty$ is proven
similarly, the only thing that remains to be shown is that the two limits in
\eqref{e:lmf} have different signs, which actually is a consequence of
\eqref{e:lixc} and the fact that $f^\prime$ has constant sign on $(c_1,c_2)$. 
Therefore, \eqref{e:lmf} is proven.

Similarly, we have that 
\begin{equation*} \lim_{x \rightarrow c_{2}^\prime+}
  M_{f}(x)=-\lim_{x \rightarrow c_{3}-}M_{f}(x)=\pm \infty. \end{equation*} 

Therefore, 
there exists $\varepsilon$ sufficiently small such that, letting
$I_{1}=[c_{1}+\varepsilon,c_{2}-\varepsilon]$  and
$I_2=[c_2^\prime+\varepsilon,c_3-\varepsilon]$ we observe that $I_1\cap I_2=\emptyset$
and that
$M_{f}(I_{j})\supseteq [c_1,c_3]$, for $j=1,2$, hence $M_f(I_1)$ and
$M_f(I_{2})$ contain $I_{1}\cup I_{2}$. Finally,
Lemma~\ref{l:jkn} is now applicable with $k=2$, which finishes the proof. 
\end{proof}

\begin{remark}\label{r:dnm}
In order to reduce the chaotic behaviour and improve numerical parameters of
approximation for lower order polynomials, 
in \cite{AmatBusquierMagrenan} and \cite{MagrenanGutierez}
damped Newton's methods have been considered. More precisely, letting
$\lambda$ be the damping parameter, one defines $N_{\lambda,f}$ and
  $M_{\lambda, f}$ as follows:
\begin{equation}\label{e:dnef}N_{\lambda,f}(x)=x-\lambda\frac{f(x)}{f^\prime(x)},
\end{equation}
and
\begin{equation}\label{e:dmef}
 M_{\lambda,f}(x) =N_{\lambda,f}(x)-\lambda\frac {f(N_{\lambda,f}(x))}{f^\prime(x)}.
\end{equation}
It is easy to observe, by inspection, that Lemma~\ref{l:lim} and
Theorem~\ref{t:main} remain true if $N_{\lambda,f}$ and
$M_{\lambda,f}$ replace $N_{f}$ and, respectively, $M_f$, for arbitrary
damping parameter $\lambda>0$, hence the chaotic behaviour
characterised by existence of periodic points of any prime period, as well as
the uncountability of the set of points of divergence of iteration of $M_f$, 
remain unaltered by damping, for the class of functions considered in
Theorem~\ref{t:main}.
\end{remark}


\section{The Scaling Theorem}\label{s:tst}

In this section we observe that the Scaling
Theorem, cf.\ Theorem~1 in \cite{AmatBusquierPlaza},
which says that $M_{f}$ is stable
under affine conjugation, remains true if we replace the analyticity 
condition on the function $f$ is with its differentiability. 

\begin{theorem}[The Scaling Theorem]\label{t:scaling}
Let $f$ be a differentiable function on $\mathbb{R}$. 
Let $T(x)=ax+b$ be an affine map with 
$a\neq 0$.
Then, for all $x\in\mathbb{R}$ with $f^\prime(x)\neq 0$, we
have 
\begin{equation*}\bigl(T\circ M_{f\circ T}\circ
    T^{-1}\bigr)(x)=M_{f}(x).\end{equation*}
\end{theorem}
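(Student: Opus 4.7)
The plan is to reduce the statement for $M_f$ to the analogous scaling identity for the classical Newton function $N_f$, and then insert the latter into the recursive definition \eqref{e:mefnef}. The only role of analyticity in the original proof of \cite{AmatBusquierPlaza} is to justify the chain rule and the existence of $f'$ at the relevant points; since the formula defining $M_f$ involves only $f$ and $f'$, differentiability of $f$ on $\mathbb{R}$ is exactly what is needed for every quantity in the identity to be well defined whenever $f'(x)\neq 0$.

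First I would set $g=f\circ T$ and record the consequences of the chain rule: $g'(y)=a\,f'(T(y))$, so that $g'(y)\neq 0$ if and only if $f'(T(y))\neq 0$. Letting $y=T^{-1}(x)$ and computing $N_g(y)=y-g(y)/g'(y)$, a direct substitution gives
\begin{equation*}
T(N_g(T^{-1}(x)))=a\Bigl(T^{-1}(x)-\frac{f(x)}{a f'(x)}\Bigr)+b
= x-\frac{f(x)}{f'(x)}=N_f(x),
\end{equation*}
which proves the classical scaling identity $T\circ N_g\circ T^{-1}=N_f$; equivalently $N_g\circ T^{-1}=T^{-1}\circ N_f$, so $T(N_g(y))=N_f(x)$.

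Next I would plug this into the definition \eqref{e:mefnef} applied to $g$. Writing
\begin{equation*}
M_g(y)=N_g(y)-\frac{g(N_g(y))}{g'(y)}=N_g(y)-\frac{f(T(N_g(y)))}{a f'(T(y))}
=N_g(y)-\frac{f(N_f(x))}{a f'(x)},
\end{equation*}
and then applying $T$ to both sides,
\begin{equation*}
T(M_g(y))=a M_g(y)+b=\bigl(aN_g(y)+b\bigr)-\frac{f(N_f(x))}{f'(x)}
=N_f(x)-\frac{f(N_f(x))}{f'(x)}=M_f(x),
\end{equation*}
which is exactly the claimed identity.

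There is no genuine obstacle here; the computation is essentially bookkeeping with the chain rule. The one conceptual point worth stressing in the write-up is that, as announced in the introduction, the derivation uses only $f'$ and never any higher derivative, so the analyticity hypothesis of \cite{AmatBusquierPlaza} may indeed be weakened to differentiability of $f$ on $\mathbb{R}$. The identical argument with $\lambda$ inserted into \eqref{e:dnef}--\eqref{e:dmef} yields the damped version cited from \cite{AmatBusquierMagrenan}, as indicated in Remark~\ref{r:dnm}.
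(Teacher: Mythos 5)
Your proposal is correct and follows essentially the same route as the paper: it first establishes the scaling identity $T\circ N_{f\circ T}\circ T^{-1}=N_f$ via the chain rule (the paper's Lemma~\ref{l:tec}) and then substitutes it into the recursive formula \eqref{e:mefnef} for $M_f$, exactly as in the paper's proof of Theorem~\ref{t:scaling}. The observation that only $f^\prime$ (and never analyticity) is used also matches the paper's stated point.
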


We first prove a lemma saying that the transformation $N_f$ satisfies a
similar property of stability under linear conjugation.

\begin{lemma}\label{l:tec} 
Under the assumptions as in Theorem~\ref{t:scaling}, 
for all $x\in\mathbb{R}$ with $f^\prime(x)\neq 0$, we have 
 \begin{equation*}\bigl(T\circ N_{f\circ T}\circ
    T^{-1}\bigr)(x)=N_{f}(x).\end{equation*} 
\end{lemma}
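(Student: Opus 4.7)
The plan is a direct computation based on the chain rule. The key observation driving the identity is that differentiating $f\circ T$ introduces a factor $a$ in the derivative, and this factor is exactly what is needed to cancel with the outer slope $a$ when we conjugate by $T$ on the outside.

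First I would set $g:=f\circ T$ and use the chain rule to get $g'(y)=a\,f'(T(y))$ for all $y\in\mathbb{R}$. In particular, $g'(y)\neq 0$ is equivalent to $f'(T(y))\neq 0$, so taking $y=T^{-1}(x)$ the hypothesis $f'(x)\neq 0$ guarantees that every expression below is defined. This is also the place where only one-time differentiability of $f$ is used; no further regularity enters.

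Next I would plug this into the definition of $N_g$, obtaining
\begin{equation*}
N_{g}(y)\;=\;y-\frac{g(y)}{g'(y)}\;=\;y-\frac{f(T(y))}{a\,f'(T(y))},
\end{equation*}
and then substitute $y=T^{-1}(x)$, using $T(T^{-1}(x))=x$, to get
\begin{equation*}
N_{g}(T^{-1}(x))\;=\;T^{-1}(x)-\frac{f(x)}{a\,f'(x)}.
\end{equation*}

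Finally I would apply $T$ to both sides. Since $T(z)=az+b$ and $a\,T^{-1}(x)+b=x$, the factor $1/a$ cancels and the right-hand side collapses to
\begin{equation*}
T\bigl(N_{g}(T^{-1}(x))\bigr)\;=\;a\,T^{-1}(x)+b-\frac{f(x)}{f'(x)}\;=\;x-\frac{f(x)}{f'(x)}\;=\;N_{f}(x),
\end{equation*}
which is the claimed identity. There is no real obstacle here; the lemma is essentially one application of the chain rule followed by a single algebraic cancellation, which is precisely why differentiability of $f$ is sufficient and the analyticity assumption in \cite{AmatBusquierPlaza} is unnecessary.
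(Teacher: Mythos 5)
Your computation is correct and is essentially the paper's own proof: both arguments apply the chain rule to get $(f\circ T)'(y)=a\,f'(T(y))$, write out $N_{f\circ T}$, and observe that the outer affine map $T$ cancels the factor $1/a$. The only cosmetic difference is the order of substitution (you plug in $y=T^{-1}(x)$ before applying $T$, the paper applies $T$ first and then sets $x=ay+b$), which changes nothing.
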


\begin{proof} Clearly, for all $y\in\mathbb{R}$ we have
$(f\circ T)'(y)=af^\prime(ay+b)$ hence, if $f^\prime(ay+b)\neq 0$ we have
\begin{equation}\label{e:neft}
N_{f\circ
      T}(y)=y-\frac{f(ay+b)}{af^\prime(ay+b)},\end{equation}
and then
\begin{align*}\bigl(T\circ N_{f\circ
      T}\bigr)(y) & = ay-a\frac{f(ay+b)}{af^\prime(ay+b)}+b\\ 
& = ay+b-\frac{f(ay+b)}{f^\prime(ay+b)} \\
& = N_f(ay+b)=(N_f\circ T)(y).
\end{align*} 
Since $T^{-1}(x)=\frac{x-b}{a}$, letting $x=ay+b$ we get 
\begin{equation*}\bigl(T\circ N_{f\circ
    T}\circ T^{-1}\bigr)(x)=N_{f}(x).\qedhere\end{equation*} 
\end{proof}

\begin{proof}[Proof of Theorem~\ref{e:neft}] By \eqref{e:mef},
for arbitrary $y\in\mathbb{R}$ with
$f^\prime(ay+b)\neq 0$, from \eqref{e:neft} we obtain
\begin{equation*} \bigl(T\circ M_{f\circ T}\bigr)(y)
=aN_{f\circ T}(y)+b-\frac{f(aN_{f\circ
      T}(y)+b)}{f^\prime(ay+b)},\end{equation*} and then, letting $x=ay+b$, 
\begin{align*} \bigl(T\circ M_{f\circ T}\circ T^{-1}\bigr)(x)
 & =\bigl(T\circ N_{f\circ T}\circ
  T^{-1}\bigr)(x)-\frac{f(\bigl(T\circ N_{f\circ T}\circ
    T^{-1}\bigr)(x))}{f^\prime(a(\frac{x-b}{a})+b)}\\
\intertext{whence, applying Lemma~\ref{l:tec},}
 & =N_{f}(x)-\frac
  {f(N_{f}(x))}{f^\prime(x)}=M_{f}(x).\qedhere\end{align*} 
\end{proof}

\begin{remark}\label{r:dst} The Scaling Theorem remains true, with almost
  exactly the same proof, for the damped Newton's function $M_{\lambda,f}$, 
see \eqref{e:dmef}, for arbitrary damping parameter $\lambda\neq 0$, more
precisely, we have
\begin{equation*}\bigl(T\circ M_{\lambda, f\circ T}\circ
    T^{-1}\bigr)(x)=M_{\lambda,f}(x),
\end{equation*} for all $x\in\mathbb{R}$ such that $f^\prime(x)\neq 0$.
For the case of the damped Newton's function $N_{\lambda,f}$, see
\eqref{e:dnef}, the corresponding generalisation of Lemma~\ref{l:tec},
\begin{equation*}\bigl(T\circ N_{\lambda,f\circ T}\circ
    T^{-1}\bigr)(x)=N_{\lambda,f}(x),\end{equation*} 
 follows from the
  proof of Theorem~2.1 in \cite{MagrenanGutierez}; although it was stated for
  analytic functions $f$, the assumption was not used in that proof.
\end{remark}

\end{document}